\newtheorem{thm}{Theorem}
\newtheorem{conj}[thm]{Conjecture}
\theoremstyle{remark}
\newtheorem{clm}{Claim}
\DeclareMathOperator{\Aut}{Aut}
\title{A bound for the distinguishing index of regular graphs}
\author[1]{Florian Lehner\thanks{Florian Lehner was supported by the Austrian Science Fund (FWF), grant J 3850-N32}}
\author[2]{Monika Pil{\'s}niak\thanks{This work was partially supported by Ministry of Science and Higher Education of Poland and OEAD grant no.PL 08/2017.}}
\author[2]{Marcin Stawiski}
\affil[1]{Institute of Discrete Mathematics, Graz University of Technology, \protect\\ Steyrergasse 30, 8010 Graz, Austria}
\affil[2]{AGH University, Department of Discrete Mathematics, \protect\\al. Mickiewicza 30, 30-059 Krakow, Poland}
\begin{document}
\maketitle
\begin{abstract}
An edge-colouring of a graph is distinguishing if the only automorphism that preserves the colouring is the identity.
It has been conjectured that all but finitely many connected, finite, regular graphs admit a distinguishing edge-colouring with two colours. We show that all such graphs except $K_2$ admit a distinguishing edge-colouring with three colours.
This result also extends to infinite, locally finite graphs.
Furthermore, we are able to show that there are arbitrary large infinite cardinals $\kappa$ such that every connected $\kappa$-regular graph has a distinguishing edge-colouring with two colours.
\end{abstract}

\section{Introduction }

Let $G$ be a connected, finite or infinite graph and let $\Aut(G)$ denote its group of automorphisms. The \emph{distinguishing index} of $G$, denoted by $D'(G)$, is the least number of colours needed to colour the edges of $G$  such that the only colour preserving automorphism is the identity. This concept was first introduced in \cite{KP}; the analogous concept for vertex colouring, often denoted by $D(G)$ is significantly older, see \cite{AC,BAB}. Note that $D(G)$ and $D'(G)$ can be arbitrarily far apart. As an example, it is easy to see that the complete graph $K_n$ satisfies $D(K_n) = n$, but $D'(K_n) = 2$ for $n \geq 6$. Another example is the complete bipartite graph. On the other hand, it is known that for most graphs $D'(G)$ does not exceed $D(G)$, see \cite{LS}.

For connected graphs with finite maximum degree $\Delta$, it is known that $D'(G) \leq \Delta$, unless $G$ is $C_3$, $C_4$ or $C_5$. This bound is sharp, and the graphs which attain it are fully characterised, see \cite{P,PS}.
On the other hand, there are many graph classes where better bounds are possible. For example it is known that, apart from finitely many exceptions, $D'(G) \leq 2$ for all traceable graphs \cite{P}, $3$-connected planar graphs \cite{PT},   Cartesian powers of finite and countable graphs \cite{BP2, GKP}, and countable graphs where every non-trivial automorphism moves infinitely many edges \cite{lehner-edge}.

The following conjecture made by Alikhani and Soltani, and independently by Imrich, Kalinowski, Pil{\'s}niak and Wo{\'z}niak \cite{IKPW}, would imply improved bounds in the case when the minimum and the maximum degree of the graph are not too far from each other.

\begin{conj} \cite{IKPW}
\label{conjecture-general}
If $G$ is a connected finite graph with minimum degree $\delta\geq 2$ and maximum degree $\Delta$, then
$D'(G)\leq \lceil\sqrt[\delta]{\Delta}\rceil +1$,
with equality only for $G=K_{\delta,r^{\delta}}$ and finitely many graphs on at most $6$ vertices.
\end{conj}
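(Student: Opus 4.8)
The plan is to split the statement into the upper bound $D'(G)\le k+1$ with $k=\lceil\sqrt[\delta]{\Delta}\rceil$, proved by a rooted layering argument, and the equality characterisation, treated afterwards by a rigidity analysis of the extremal configuration. The inequality driving everything is $k^{\delta}\ge\Delta$, which holds because $k\ge\sqrt[\delta]{\Delta}$: with $k$ colours the number of length-$\delta$ colour sequences on a set of edges already matches or exceeds the largest possible number of neighbours of a vertex. The extra colour (the ``$+1$'') is kept in reserve to destroy the residual symmetries \emph{among} the edges incident to a single vertex, and this is exactly the phenomenon that makes $k$ colours insufficient in the symmetric extremal graphs.

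First I would fix a root $v_0$, build the breadth-first layers $L_0,L_1,\dots$ around it, and colour edges layer by layer. When processing a vertex $w$, its down-neighbours in the next layer split into sibling classes that a colour-preserving automorphism could still permute; to separate them I attach to each such neighbour $u$ a \emph{code} read off from the colours of the at least $\delta$ edges incident to $u$, using the auxiliary colour to impose a linear order on those $\delta$ edges so that the code is a genuine sequence and not merely a multiset. Since there are at least $k^{\delta}\ge\Delta$ available codes and $w$ has at most $\Delta$ down-neighbours, distinct codes can be handed out greedily. An initial step arranges the colours on the first layers around $v_0$ so that $v_0$ is fixed by every colour-preserving automorphism (here having at least three colours is what creates enough room). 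An induction on the layer index then shows that every such automorphism fixes each vertex, so the colouring is distinguishing.

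For the equality part I would argue by contradiction. If $k$ colours never suffice, then at some stage of the layering the number of codes actually needed must meet the bound $k^{\delta}$ with equality, with no slack left to use the auxiliary colour. Turning these forced equalities into structure should compel every relevant vertex to have degree exactly $\delta$, every neighbourhood to be completely symmetric, and the unavoidable code collisions to localise the picture to a complete bipartite neighbourhood; connectivity together with the uniform behaviour on the degree-$\delta$ side should then propagate this to $G\cong K_{\delta,r^{\delta}}$, while the finitely many sporadic exceptions are dispatched by inspecting all graphs on at most $6$ vertices. As a sanity check on the threshold, a uniformly random $(k+1)$-colouring makes the probability that a fixed non-trivial automorphism is colour-preserving decay like a power of $k+1$, and $k^{\delta}\ge\Delta$ is precisely the balance that would push the expected number of such automorphisms below $1$ when $\Aut(G)$ is not too large.

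I expect the equality characterisation, not the bound, to be the main obstacle. The bound is a soft counting-plus-induction statement, but proving that \emph{only} $K_{\delta,r^{\delta}}$ and the small exceptions attain it demands tight control over graphs that look complete-bipartite locally yet may differ globally, and over graphs whose automorphism group is large enough to force code collisions across several layers simultaneously. Propagating the local rigidity to a global isomorphism without overlooking some unexpected highly symmetric construction is the delicate point; the regular slice $\delta=\Delta$, where $k=2$ and the bound reads $D'(G)\le 3$, is the most accessible case and a natural proving ground for the layering technique before confronting the general regime $\delta<\Delta$.
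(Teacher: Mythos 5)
You are attempting to prove a statement that the paper does not prove and in fact partially refutes. The statement is a \emph{conjecture} (due to Imrich, Kalinowski, Pil\'sniak and Wo\'zniak), and the paper's first substantive observation is that its equality clause is \textbf{false}: for $K_{p,q}$ with $p<q$ and any integer $d>1$ satisfying $d^p - \lceil \log_d p\rceil + 1 \leq q \leq d^p$, one has $D'(K_{p,q}) = d+1 = \lceil\sqrt[\delta]{\Delta}\rceil +1$ (see \cite{FI,IJK}). These graphs attain equality, can have arbitrarily many vertices, and are not of the form $K_{\delta,r^{\delta}}$ whenever $q<d^p$. Consequently your second stage --- the ``rigidity analysis'' that is supposed to force every extremal graph to be $K_{\delta,r^{\delta}}$ or a small sporadic exception --- cannot be completed correctly: any argument concluding that forced code collisions localise to $G\cong K_{\delta,r^{\delta}}$ must break precisely on these near-extremal bipartite graphs, where the collisions occur even though the part sizes miss $r^\delta$ by up to $\lceil\log_d p\rceil-1$.

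The inequality part of the conjecture is also still open in general; the paper establishes it only in the regular case $\delta=\Delta$ (where the bound reads $D'(G)\leq 3$), and does so by a different mechanism: orbits under the stabiliser $\Aut(G,r)$ of a root rather than BFS layers, an induction on the degree $\Delta$ (colouring the ``horizontal'' subgraphs by the induction hypothesis), and a counting argument comparing asymmetric ``decorations'' with the number of components an automorphism can permute. Your layering-plus-codes sketch has a concrete gap even as an approach to the inequality: the code of a neighbour $u$ of $w$ is read from $\delta$ edges incident to $u$, but each such edge also contributes to the code of its other endpoint, and a vertex in layer $i+1$ typically has several up-neighbours in layer $i$, so codes cannot be handed out greedily ``one parent at a time'' --- the choice made while processing one parent constrains, and can clash with, the codes needed at the other parents. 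The count $k^{\delta}\geq\Delta$ only guarantees enough codes relative to a \emph{single} parent; it does not resolve these simultaneous constraints. This is exactly the phenomenon exhibited in the paper's introduction, where it is shown that any colouring of $K_{\delta,\Delta}$ with $r\le\sqrt[\delta]{\Delta}$ colours forces either a code collision or a colouring so rigid that every permutation of the small side extends to a colour-preserving automorphism, proving $D'(K_{\delta,\Delta})>\sqrt[\delta]{\Delta}$. A correct proof would have to handle this interaction, which is the actual difficulty of the conjecture, not a routine induction.
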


We observe that the second part of Conjecture~\ref{conjecture-general} is not true in general; equality also holds for other complete bipartite graphs. Indeed, consider a complete bipartite graph $K_{p,q}$ with $p<q$ and an integer $d>1$ such that $d^p - \lceil \log_d p\rceil + 1 \leq q\leq d^p$. It is known that in this case $D'(K_{p,q})=d+1=\lceil\sqrt[\delta]{\Delta}\rceil +1$, see~\cite{FI,IJK}.

It is still possible, however, that the first part of Conjecture~\ref{conjecture-general} holds, that is, $D'(G)\leq \lceil\sqrt[\delta]{\Delta}\rceil +1$ for any finite connected graph $G$ with minimum degree $\delta\geq 2$ and maximum degree $\Delta$. Note that if this is the case, then the bound is almost tight for $K_{\delta, \Delta}$. Indeed, assume that we have a colouring of the edges of $K_{\delta, \Delta}$ with $r \leq \sqrt[\delta]{\Delta}$ colours. Let $v_1, \dots, v_\delta$ be an enumeration of the vertices in the part of size $\delta$. Assign to each vertex $u$ in the part of size $\Delta$ the sequence of colours of the edges $uv_1, \dots, uv_\delta$. If two of these sequences coincide, then the corresponding vertices can be swapped by  a colour preserving automorphism. This can only be avoided if $r = \sqrt[\delta]{\Delta}$ and each of the $\Delta = r^\delta$ different sequences appears exactly once. It is easy to check that for such a colouring any permutation of $v_1, \dots, v_\delta$ extends to a colour preserving automorphism of $K_{\delta, \Delta}$, thus showing that $D'(K_{\delta, \Delta}) > \sqrt[\delta]{\Delta}$.

In this paper we are interested in regular graphs, that is, graphs where $\delta = \Delta$. We prove that the first part of Conjecture~\ref{conjecture-general} holds for such graphs. However, based on the observation that $D'(K_{\Delta,\Delta}) = 2$ for $\Delta \geq 4$, we conjecture that our result is not tight.

\begin{conj}
\label{conjecture-reg}
If $G$ is a connected, finite, regular graph, then $D'(G)\leq 2$, unless $G$ is either $K_n$ for $n \leq 5$, or $K_{n,n}$ for $n \leq 3$, or $C_5$.
\end{conj}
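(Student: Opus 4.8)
The plan is to combine the probabilistic ``motion'' method with a reduction to highly symmetric graphs, handling the finite list of exceptions by direct inspection. For a nontrivial $\phi\in\Aut(G)$, write $\mu(\phi)$ for the number of edges moved by $\phi$ (its \emph{edge-motion}). Colour each edge independently red or blue with probability $\tfrac12$. Then $\phi$ preserves the colouring only if every edge-orbit of $\phi$ is monochromatic; since the moved edges split into orbits of size at least two, the number of moved orbits is at most $\mu(\phi)/2$, so the probability of this event is at most $2^{-\mu(\phi)/2}$. A union bound shows that a distinguishing $2$-colouring exists whenever $\sum_{\phi\neq\id}2^{-\mu(\phi)/2}<1$, and in particular whenever $2^{\,m_E(G)/2}>|\Aut(G)|$, where $m_E(G)=\min_{\phi\neq\id}\mu(\phi)$ is the minimum edge-motion. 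The first step, then, is to show that all but a controlled family of connected finite regular graphs satisfy this inequality, so that two colours suffice automatically.

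The second step is to dispose of the graphs that fail the motion inequality, namely those combining a large automorphism group with some low-motion automorphism. The most symmetric examples are already understood: $D'(K_n)=2$ for $n\ge 6$ and $D'(K_{n,n})=2$ for $n\ge 4$, as recorded in the introduction, which is precisely why only $K_n$ with $n\le 5$, $K_{n,n}$ with $n\le 3$, and $C_5$ appear as exceptions; these last few are then checked by hand. For the remaining low-motion graphs I would exploit additional structure: whenever $G$ is traceable one obtains $D'(G)\le 2$ apart from finitely many graphs from the bound for traceable graphs cited above, which reduces matters to connected regular graphs that are simultaneously non-traceable and highly symmetric. Here one can attempt an explicit construction, colouring a suitable spanning substructure (a long path, a low-diameter spanning tree, or a union of few cycles) so as to pin down the images of enough edges that any colour-preserving automorphism must fix a generating set of vertices and hence, by connectivity, be the identity.

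The main obstacle is exactly the residual class of connected regular graphs with small edge-motion that are neither complete, nor complete bipartite, nor traceable: vertex-transitive graphs such as certain circulants, Kneser and Johnson graphs, or Cayley graphs on groups with many short relations. For these the crude union bound is too weak and there is no uniform spanning structure to lean on, so one needs a family-by-family analysis of the action of $\Aut(G)$ on edges together with sharper, orbit-sensitive estimates that weight each automorphism by its cycle type rather than by raw motion. Note that the $3$-colour bound proved above does not obviously help, since eliminating the third colour class by recolouring appears to require a genuinely different argument. Establishing a clean edge-motion lemma strong enough to cover all of these cases — an edge analogue of the known motion results for vertex colourings, but sensitive to the rigidity forced by regularity — is the crux of the problem, and is the reason the statement is only conjectured here rather than proved.
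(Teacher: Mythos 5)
You have not produced a proof, and you acknowledge as much in your final sentence; to be clear, the paper does not prove this statement either --- it is stated as Conjecture~\ref{conjecture-reg} and left open, with the paper's actual contribution being the weaker 3-colour bound of Theorem~\ref{thm:main}. So the substantive question is whether your programme could plausibly close the conjecture, and here there is a concrete gap in your first step. The per-automorphism estimate $2^{-\mu(\phi)/2}$ and the union-bound criterion $\sum_{\phi\neq\id}2^{-\mu(\phi)/2}<1$ are correct, but the premise that ``all but a controlled family'' of connected finite regular graphs satisfy $2^{m_E(G)/2}>|\Aut(G)|$ is unsupported and is in fact badly false: regularity does not force large edge-motion relative to group order. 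Already in $K_{n,n}$ a transposition of two twins moves only $2n$ edges while $|\Aut(K_{n,n})|=2(n!)^2$, so the required inequality fails for every $n$; more importantly, the same failure occurs in \emph{any} regular graph containing many pairs of twins (for instance lexicographic products of a regular graph with $\overline{K_m}$ or $K_m$), and for a twin swap the estimate $2^{-\mu/2}$ is essentially tight, so no orbit-sensitive or cycle-type-weighted refinement of the union bound can rescue the argument on this class. Thus the ``residual class'' of your second and third paragraphs is not a thin list of exotic vertex-transitive families plus the known exceptions; it is a large and structurally diverse class (twin-rich regular graphs, only some of which are traceable), and handling it is not a matter of sharper estimates but of a genuinely new idea. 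You do locate the crux honestly, but the plan as written would stall at step one, not at the exotic families you name.

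For comparison, the paper sidesteps motion arguments entirely. Its 3-colour theorem is proved by a constructive transfinite-style induction over the orbits $S_i$ of the root stabiliser $\Aut(G,r)$: horizontal edges inside an orbit are coloured by induction on the degree (each component of the horizontal graph is regular of smaller degree), and residual symmetries are killed by ``decorations'' --- small, pairwise non-similar patterns of red forward edges and blue back edges --- whose counting (Claim~\ref{clm:DgeqN}) is where regularity enters. The third colour (blue) is used as a scarce marker for the root and the decorations, which is precisely why that method does not compress to two colours; this matches your closing remark that the 3-colour result gives no obvious leverage on the conjecture.
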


Besides Conjecture \ref{conjecture-general}, there is another compelling reason why one might expect this to be true. It  is easy to show that if a graph $G$ with at least $7$ vertices contains a Hamiltonian path, then $D'(G) = 2$.  Indeed, for such a graph one can obtain an asymmetric spanning tree $H$ with one vertex of valence three and three branches of different lengths simply by adding an edge to $P$ and then removing an edge. Colouring all edges of such a spanning tree with one colour and the remaining edges with a second colour yields a distinguishing edge $2$-colouring; for more details see \cite{P}.

While there are regular graphs that do not admit Hamiltonian paths, it is not unreasonable to think that most of them at least have an asymmetric spanning tree. By Dirac's Theorem, a graph on $n$ vertices whose minimum degree is at least $\frac n2$ has a Hamiltonian cycle. Consequently, if $G$ is a regular graph, then either $G$ or its complement has a Hamiltonian cycle, and one might hope that a Hamiltonian cycle in the complement of $G$ can also be used to construct an asymmetric spanning tree of $G$. For vertex transitive graphs it is also worth pointing out that Lov\'asz~\cite{LL}  conjectured that all of them admit a Hamiltonian path.

As mentioned above, in this paper we make progress on Conjecture~\ref{conjecture-reg} and some natural generalisations of it.
More precisely, in Section \ref{main} we prove the following theorem.
It is worth pointing out that this result covers both finite and infinite, locally finite graphs.
\begin{thm}
\label{thm:main}
Let $G$ be connected $\Delta$-regular graph for $\Delta \in \mathbb N \setminus \{1\}$.
Then $D'(G)\leq 3$.
\end{thm}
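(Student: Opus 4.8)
The plan is to fix a spanning tree $T$ of $G$, give every edge outside $T$ the third colour, and use only the first two colours on $E(T)$. Any colour-preserving automorphism $\varphi$ maps the colour-$3$ class to itself and therefore satisfies $\varphi(T)=T$; hence it restricts to an automorphism of $T$ preserving the induced $2$-colouring, and since $T$ is spanning, $\varphi=\id$ exactly when this restriction is trivial. It therefore suffices to $2$-colour $E(T)$ so that no non-trivial element of the stabiliser $\Aut(G)_{T}=\{\varphi\in\Aut(G):\varphi(T)=T\}$ preserves the colouring. This is genuinely weaker than making the $2$-colouring distinguishing for all of $\Aut(T)$: we only need to break those tree automorphisms that actually extend to $G$, which is where regularity will enter.

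For the tree I would take a depth-first search tree rooted at a vertex $r$ in the finite case, and a normal spanning tree in the infinite case (these exist because a connected locally finite graph is countable). The defining property is that every non-tree edge joins a vertex to one of its ancestors, so the colour-$3$ edges already encode a large part of each vertex's position relative to $r$. Consequently every $\varphi\in\Aut(G)_{T}$ must respect the pattern of back-edges while permuting the branches hanging below each vertex, and the only symmetries left to destroy are interchanges of branches that are isomorphic both as coloured subtrees and in the way their back-edges reach the rest of $G$.

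I would then $2$-colour $E(T)$ by processing it away from the root, assigning sibling branches distinct colour patterns whenever they are otherwise isomorphic. The \emph{main obstacle} is a vertex $v$ meeting many pairwise isomorphic branches -- in the extreme, many sibling leaves -- since with two colours and one tree edge per leaf these cannot be separated by the tree-colouring alone. This is precisely where I would use that $G$ is $\Delta$-regular with $\Delta\ge 2$: every leaf of $T$ carries at least one back-edge (at least two once $\Delta\ge3$), so sibling leaves that look alike in $T$ are typically separated by the ancestors their back-edges hit. What has to be established, uniformly over all $v$, is that only a boundedly small number of branches at $v$ can agree simultaneously as coloured subtrees and in their back-edge pattern, and that these few genuinely symmetric copies can always be pulled apart by the two available colours. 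Turning this heuristic into a clean, case-free statement is the real work of the proof.

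Finally, the infinite locally finite case runs on the same scaffold, with the greedy colouring replaced by a back-and-forth construction: one enumerates the candidate symmetry-producing branch interchanges and, at each step, colours finitely many further tree edges so as to kill one more candidate, the normal-spanning-tree order guaranteeing that these choices do not conflict. Since putting colour $3$ on the non-tree edges only shrinks the stabiliser $\Aut(G)_{T}$ that we must defeat, this is exactly the favourable regime for a two-colour distinguishing argument, and we obtain $D'(G)\le 3$ in both the finite and the infinite setting. The only connected regular graph excluded, $K_2$, is the unique one with $\Delta=1$, matching the hypothesis $\Delta\ge2$.
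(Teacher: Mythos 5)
Your proposal is a framework plus an acknowledged missing step, and that missing step is essentially the whole theorem. You set up the scaffold (spanning tree $T$, all non-tree edges in the third colour, so every colour-preserving automorphism stabilises $T$), correctly identify the critical configuration (several pairwise isomorphic sibling branches whose back edges reach the same ancestors), and then state that proving a uniform bound on how many such branches can coincide ``is the real work of the proof''---without giving any argument for it. Worse, the most extreme instance of your own critical configuration is fatal to the framework, not merely hard: if some vertex has $k \geq 3$ sibling leaves whose back edges go to exactly the same set of ancestors, then each of these leaves is incident to exactly one tree edge, while all of its other $\Delta - 1$ edges carry the uniform third colour; so the $k$ leaves realise at most two distinct colour patterns, and a transposition of two of them is an automorphism of $G$ that fixes $T$ setwise and preserves your colouring. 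Regularity does not obviously exclude this for every admissible tree; you would need to prove that some DFS or normal spanning tree avoids it, and nothing in the proposal does so.

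The paper's proof avoids this trap by not committing to a spanning tree at all and, crucially, by allowing all three colours on the edges your scheme freezes. It decomposes $G$ into the orbits $S_i$ of the stabiliser of a root, colours the ``horizontal'' subgraph inside each orbit by induction on $\Delta$ (its components are regular of strictly smaller degree---this, rather than back edges at leaves, is where regularity enters), and then kills the remaining symmetries by ``decorations'': for each component of the horizontal subgraph one recolours a chosen set of forward edges red and at most two back edges blue. The quantitative heart is the claim $D_K \geq N_K$: the number of pairwise non-similar asymmetric decorations available at a component is at least the number of components it can be mapped to. That counting is exactly the statement your heuristic leaves unproved, and it only goes through because a leaf-like component may be decorated using all $\Delta$ of its incident edges, not just the single tree edge your scheme permits to carry information. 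So the gap is not cosmetic: the rule ``all non-tree edges get colour 3'' discards precisely the freedom that the counting argument needs.
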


Moreover, in Section \ref{infinite} we consider $\Delta$-regular graphs where $\Delta$ is infinite. It was shown in \cite{BP} that any connected graph in which all degrees are countable satisfies $D'(G) \leq 2$. We extend this result by showing that there exist arbitrary large cardinals $\kappa$ such that every connected $\kappa$-regular graph has distinguishing index at most two. Namely, every fixed point of the aleph hierarchy has the said property.

\section{Proof of the main result}\label{main}

In this section we prove the following theorem which easily implies Theorem \ref{thm:main}.

\begin{thm}
Let $G$ be connected $\Delta$-regular graph for some finite $\Delta$.
Unless $G= K_2$, there is a distinguishing edge colouring with $3$ colours---red, green, and blue---that additionally satisfies the following property.
\begin{enumerate}[label=(\textasteriskcentered)]
    \item \label{itm:star}
    There is at most one vertex all of whose incident edges are coloured blue. If $G = K_n$, then there is no such vertex.
\end{enumerate}
\end{thm}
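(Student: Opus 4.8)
The plan is to organise the colouring around a breadth-first spanning tree together with an \emph{anchor} vertex, and then to prove rigidity by an inductive, level-by-level argument. I would first dispose of the degenerate and highly symmetric cases. When $\Delta = 2$ the graph $G$ is a cycle or a two-way infinite path, for which an explicit colouring using red and green (with at most a few small cycles needing the third colour) is written down directly while keeping every vertex incident with a non-blue edge. The complete graphs $K_n$ are treated separately: for $n \geq 6$ one may take a distinguishing $2$-colouring in red and green, and for $n \leq 5$ a short case check suffices; in every instance one arranges that each vertex is incident with a red or green edge, so that (*) holds with \emph{no} all-blue vertex, as required. The bulk of the work concerns connected $\Delta$-regular graphs with $\Delta \geq 3$ that are not complete.

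For such a graph I would fix a root $r$ and run breadth-first search, producing a spanning tree $T$ and the level partition $L_0 = \{r\}, L_1, L_2, \dots$, so that every edge of $G$ joins vertices whose levels differ by at most one. Blue serves as the neutral background: I colour every edge incident with $r$ blue, and I arrange, as part of the construction, that every vertex other than $r$ is incident with at least one red or green edge. This yields property~\ref{itm:star} at once, since $r$ becomes the unique candidate for an all-blue vertex. Red and green are then used to attach to each $v \neq r$ a finite \emph{code}: a string over $\{\text{red},\text{green}\}$ encoding the position of $v$ in a fixed enumeration of the children of its parent $p(v)$. Because $G$ is locally finite, $p(v)$ has degree $\Delta$ and hence at most $\Delta$ children, so each code is a finite string of length about $\log_2 \Delta$, which I would realise on edges canonically associated to $v$ (for instance along a marked descending path in the subtree of $v$, using a reserved blue edge as an end-marker, and falling back on the $\Delta - 1 \geq 2$ non-tree edges available at shallow or leaf vertices).

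The distinguishing argument then runs by induction on the level. Let $\varphi$ be a colour-preserving automorphism. Since $r$ is the only all-blue vertex, $\varphi(r) = r$; hence $\varphi$ preserves distance to $r$ and fixes each level $L_i$ setwise. Assume $\varphi$ fixes $L_0, \dots, L_{i-1}$ pointwise and take $v \in L_i$ with parent $p(v) \in L_{i-1}$. As $\varphi$ fixes $p(v)$ and preserves colours, $\varphi(v)$ is a level-$i$ child of $p(v)$ whose code coincides with that of $v$; since the codes of siblings are pairwise distinct, $\varphi(v) = v$. Thus $\varphi$ fixes $L_i$ pointwise, and by induction $\varphi = \id$. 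Note this works verbatim for infinite locally finite $G$, because connectivity places every vertex at some finite level.

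The main obstacle is exactly the separation of the possibly many children of a single vertex using only three colours: when $\Delta$ is large one cannot distinguish siblings by the colours of their parent edges alone, so the codes must be genuine multi-edge strings, and the technical heart is to write them without mutual interference. Concretely, I expect the real work to lie in showing that the canonical descending paths carrying the codes of distinct vertices can be chosen sufficiently disjoint, that an automorphism can unambiguously delimit and read each code, and that the spare non-tree edges at leaves suffice to host the required strings while still leaving each non-root vertex a non-blue edge. I would manage this by processing vertices in breadth-first order and allocating code-edges greedily, checking at each step that no previously written code is disturbed; the rigidity induction above is then the easy part.
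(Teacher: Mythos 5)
There is a genuine gap, and it sits exactly where you say you ``expect the real work to lie.'' Your outer skeleton (a root forced to be fixed because it is the unique all-blue vertex, then a level-by-level rigidity induction) matches the paper's, but the heart of your argument---the sibling codes---is built on a structure that is not automorphism-invariant. A BFS tree $T$ is an arbitrary choice, and a colour-preserving automorphism $\varphi$ fixing $r$ and $L_0,\dots,L_{i-1}$ pointwise has no reason to map $T$ into itself. Consequently it need not carry ``the marked descending path carrying the code of $v$'' to the corresponding object at $\varphi(v)$: the vertex $\varphi(v)$ might be a tree-leaf whose code was written on non-tree edges, or a vertex whose code sits on edges of a completely different shape, or might not even be a $T$-child of $p(v)$ at all (only a neighbour of $p(v)$ at level $i$). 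So the key step ``$\varphi(v)$ is a child of $p(v)$ whose code coincides with that of $v$, hence $\varphi(v)=v$'' does not follow: nothing forces $\varphi$ to map code-edges to code-edges, so equality of colours on $\hat E$-type sets does not translate into equality of codes. To repair this you would need the code-carrying edges to be recognisable from the colouring alone, by a rule applied uniformly at every vertex, while the edges are shared between vertices (a non-tree edge carries constraints from both endpoints, a tree edge from parent and child). That is precisely the interference/delimiting problem you postpone to a ``greedy allocation, checking that no previously written code is disturbed''---but that deferred step \emph{is} the theorem; with three colours and arbitrarily large $\Delta$ there is no a priori slack to both write pairwise distinct codes and make them unambiguously readable, and no argument is given that the greedy process can always succeed.

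It is instructive to compare with how the paper escapes this trap. Instead of an arbitrary spanning tree it uses the orbit partition $S_0,S_1,\dots$ of $V(G)$ under the stabiliser $\Aut(G,r)$, which is canonical and hence automatically preserved by every automorphism fixing $r$; likewise the splitting of the edges at $S_i$ into back, forward and horizontal edges is automorphism-invariant, so the ``reading'' problem disappears. The horizontal edges induce a regular graph of strictly smaller degree, which is coloured using an induction on $\Delta$ (your proposal has no induction on the degree), and the forward/back edges are handled by ``decorations'' together with a counting argument (the claim $D_K \geq N_K$) showing that each orbit of components admits enough pairwise non-similar asymmetric decorations. Finally, there is a subtlety your sketch never touches: recolouring back edges blue at level $i$ could destroy the distinguishing property already secured at earlier levels; the paper's restriction that a decoration turns blue either one red back edge or two green back edges exists exactly so that earlier levels stay rigid. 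Without an automorphism-invariant substitute for your BFS tree and without an argument of this counting type, your proposal cannot be completed as stated.
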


\begin{proof}
It is not hard to see (c.f.\ \cite{KP}) that there is such a colouring for complete graphs on at least $3$ vertices.
Furthermore, by results from \cite{P} and \cite{PS}, it is known that for any graph $G$ with maximum degree $\Delta$ it holds that $D'(G)\leq \max \{ \Delta-1, 3 \}$; the proofs also show that if $G$ is regular of degree $\Delta \leq 4$, then there is a distinguishing edge colouring satisfying \ref{itm:star}. Therefore, we may assume that $G$ is not complete, that $\Delta \geq 5$, and that any $\Delta'$-regular graph with $\Delta'<\Delta$ has a distinguishing edge colouring with $3$ colours which satisfies \ref{itm:star}.

For the rest of the proof, fix an arbitrary root vertex $r$ in $G$.
Let $\mathcal S$ be the set of orbits under $\Aut(G,r)$, where $\Aut(G,r) = \{\gamma \in \Aut(G) \mid \gamma r = r\}$ is the stabiliser of $r$ in $\Aut G$.
Order $\mathcal S$ by distance from $r$, with ties broken arbitrarily, and let $(S_i)_{0 \leq i < |\mathcal S|}$ be the corresponding enumeration.
Denote by $E_i$ the set of edges incident to $S_i$.
For $i< |\mathcal S|$, let $m(i) = \max \{j \mid \exists k \leq i \colon E_k \cap E_j \neq \emptyset\}$, in other words, $m(i)$ is the maximal $j$ such that there is an edge connecting $S_k$ to $S_j$ for $k \leq i$. Note that if $i<|\mathcal{S}|$, then $m(i)\geq i+1$ because there must be an edge from $S_{i+1}$ to $S_k$ for some $k\leq i$.

Let $\hat E_i = E \setminus \bigcup_{j > m(i)} E_j$, that is, $\hat E_i$ is the set of edges both of whose endpoints are in $\bigcup _{j \leq m(i)} S_j$.
Clearly, $E_i \subseteq \hat E_i$.
It furthermore follows from the definition of $m(i)$ that $\hat E_i \supseteq \hat E_{i-1}$.
Note that $\Delta$ is finite, so the sets $S_i$, $E_i$ and $\hat E_i$ are finite as well.
We partition $E_i$ into the following three sets.\begin{align*}
    & \{xy\in E(G):  x\in S_i, y\in S_j \text{ and } j < i  \},\\
    & \{xy\in E(G):  x\in S_i, y\in S_j \text{ and } j > i  \},\\
    & \{xy\in E(G): x, y\in S_i  \}.
\end{align*}
We refer to edges in the first set as \emph{back edges}, to edges in the second set as \emph{forward edges}, and to edges in the third set as \emph{horizontal edges} of $S_i$ respectively.
Vertices connected to $S_i$ by forward or back edges will be referred to as \emph{forward neighbours} or \emph{back neighbours} of $S_i$ respectively.
Note that by definition of $S_i$, every vertex in $S_i$ is incident to the same number $f$ of forward edges, the same number $b$ of back edges, and the same number $h$ of horizontal edges (and clearly $f+b+h=\Delta$).
Further note that by definition, $\bigcup_{j < i} S_j$ contains all vertices that are closer to $r$ than $S_i$ (and perhaps some vertices at the same distance to $r$).
Thus for $i > 0$ every vertex in $S_i$ has at least one back neighbour; in particular $b > 0$.

Next, for $0 \leq i < |\mathcal S|$, we define a colouring $c_i$ with colours red, green, and blue, such that the following properties are satisfied.
\begin{enumerate}[label = (\Roman*)]
    \item \label{itm:rootcolour}
    $r$ is the only vertex where all incident edges are coloured blue in $c_i$,
    \item  \label{itm:changecolour}
    for $i > 0$, the colourings $c_{i-1}$ and $c_{i}$ only differ on $E_i$, and
    \item \label{itm:preservecolour}
    for $0 \leq j \leq i$, if $\gamma \in \Aut(G,r)$ preserves the restriction of $c_i$ to $\hat E_j$, then $\gamma$ pointwise fixes $S_j$.
\end{enumerate}
Before we proceed with the construction, we demonstrate how this yields a distinguishing edge colouring.

If $\mathcal S$ is finite, then let $c = c_{|\mathcal S|-1}$.
By \ref{itm:rootcolour}, any automorphism preserving $c$ must fix the root $r$, and by \ref{itm:preservecolour} any such automorphism must pointwise fix every $S_j$.
Any vertex is contained in some $S_j$, hence we conclude that $c$ is distinguishing.

Now assume that $\mathcal S$ is infinite.
Let $c$ be the pointwise limit of the $c_i$, in other words, $c(e)$ takes the same value as all but finitely many $c_i(e)$.
This exists by \ref{itm:changecolour}: if $e$ connects vertices in $S_i$ and $S_j$, then it has the same colour in every $c_k$ for $k \geq \max(i,j)$.
It also follows from \ref{itm:changecolour} that for any finite set $E' \subseteq E$ there is an index $k$ such that $c(e) = c_k(e)$ for all $e \in E'$.
Consequently, since $G$ is locally finite, $r$ is the unique vertex where $c$ assigns blue to all incident edges.
Thus $r$ must be fixed by any automorphism preserving $c$.
Since $\hat E_j$ is finite for every $j$, there must be some $k\geq j$ such that $c$ and $c_k$ agree on $\hat E_{j}$.
By \ref{itm:preservecolour}, applied to $c_k$, we conclude that any element of $\Aut(G,r)$ which preserves $c$ must fix $S_j$ pointwise.
This holds for every $j$, and each vertex is contained in some $S_j$, hence $c$ is distinguishing.

It remains to construct the colourings $c_i$. For the inductive construction, it will be useful to ensure that the colourings also satisfy the following technical conditions.
\begin{enumerate}[resume,label = (\Roman*)]
    \item \label{itm:allgreen}
    If $e \notin E_j$ for any $j \leq i$, then $c_i(e)$ is green.
    \item \label{itm:blueedges}
    If $e \in E_j$ for $j > i$ and $c_i(e) = \text{blue}$, then $e \in E_0$. In other words, the only blue edges with respect to $c_i$ are between two vertices in $\bigcup_{j \leq i} S_j$, or are incident to $r$.
\end{enumerate}

Let $c_0$ be the colouring where edges incident to $r$ are coloured blue and all other edges are coloured green. This trivially satisfies \ref{itm:rootcolour} and \ref{itm:preservecolour}--\ref{itm:blueedges}. Property \ref{itm:changecolour} does not apply for $i=0$.

Now let $i > 0$ and assume that $c_j$ has already been defined for every $j < i$. By \ref{itm:changecolour}, in order to describe $c_i$ it suffices to describe $c_i$ on $E_i$. The construction of the colouring $c_i$ consists of two steps: in the first step we colour the horizontal edges, in the second step we use a subtle recolouring procedure on the forward and back edges to get rid of any remaining symmetries.

The first part is fairly straightforward.
Let $H$ be the subgraph of $G$ induced by the horizontal edges of $S_i$.
\begin{enumerate}[label=(H\arabic*)]
    \item \label{itm:cross0}
    If $h=0$, then there are no horizontal edges to colour.
    \item \label{itm:cross1}
    If $h=1$, then colour the edges with red, green, and blue such that no orbit containing at least two edges with respect to the pointwise stabiliser of $\bigcup_{j < i} S_j$ has more than half of its edges coloured by the same colour.
    \item \label{itm:cross2}
    Otherwise $2 \leq h < \Delta$, so each component $K$ of $H$ is a connected $\Delta'$-regular graph with $2 \leq \Delta' < \Delta$.
    By our general induction assumption we can find an edge colouring of $K$ which satisfies \ref{itm:star} such that any colour preserving automorphism which setwise fixes $K$ must also fix it pointwise.
\end{enumerate}

Note that the colouring of the horizontal edges defined above generally will not break all automorphisms acting non-trivially on $S_i$.
In case $h=0$ we did not break any automorphisms at all, but even if $h \geq 2$ there could be colour preserving automorphisms which permute the components of .
We call an automorphism \emph{persistent} if it pointwise fixes every $S_j$ for $j < i$ and preserves the colouring on the horizontal edges defined above.

The second part hinges on the following recolouring procedure for forward and back edges.
A \emph{decoration} of a component $K$ of $H$ is a pair $(F,B)$, with the following properties.
\begin{enumerate}[label = (D\arabic*)]
    \item \label{itm:forward}
    $F$ is a set of forward edges incident to $K$.
    \item \label{itm:back}
    $B$ is either the empty set, or a set consisting of a single red back edge incident to $K$, or a set consisting of two green back edges incident to $K$.
\end{enumerate}
We say that we \emph{decorate} $K$ by $(F,B)$, if we recolour the forward edges incident to $K$ such that exactly those in $F$ are red and the rest are green, and change the colour of the back edges in $B$ to blue while all other back edges incident to $K$ are coloured with the same colour as in $c_{i-1}$.

Call a decoration $(F,B)$ \emph{asymmetric}, if any persistent automorphism that fixes $K$ setwise and maps $F$ and $B$ onto themselves must fix $K$ pointwise.
Call two decorations \emph{similar}, and write $(F,B) \sim (F',B')$, if there is a persistent automorphism mapping $F$ to $F'$ and  $B$ to $B'$.
Clearly, $\sim$ is an equivalence relation on the set of all decorations.
By definition of $\sim$, if $(F,B) \nsim (F',B')$ and $K$ and $K'$ are decorated by $(F,B)$ and $(F',B')$ respectively, then no colour preserving, persistent automorphism can map $K$ to $K'$.

Our strategy in the second part is to assign asymmetric decorations to components such that no two decorations are similar, and then decorate every component with the corresponding decoration.

For this purpose it is enough to show the following claim.

\begin{clm}
\label{clm:DgeqN}
Let $D_K$ be the number of non-similar asymmetric decorations available at some component $K$, and let $N_K$ be the number of components that $K$ can be mapped to by persistent automorphisms. Then $D_K \geq N_K$.
\end{clm}
Indeed, if this is true, then we can greedily assign decorations to components, making sure that each of them receives a decoration that is not similar to any decorations used on other components in this orbit.

To bound $N_K$ from above, note that every vertex is incident to at least one back edge. Any persistent automorphism must fix the other endpoint of this back edge, so $N_K \leq \Delta$. Moreover, if this bound is sharp, then it is only sharp for $S_1$ (and this can only happen if $S_1$ contains all neighbours of $r$), otherwise the bound decreases to $N_K \leq \Delta -1$. Further recall that persistent automorphisms are required to preserve the colouring of the horizontal edges. Consequently, if $h=1$, then our colouring of the horizontal edges ensures that $N_K \leq \lfloor \Delta / 2 \rfloor$.

Next we establish lower bounds for $D_K$.
For this purpose, we construct sets of non-similar decorations as follows.
Let $F^*$ be some set of forward edges incident to $K$.
Let $\mathcal F$ be a set of subsets of $F^*$ all of which have different cardinalities $0,\dots,|F^*|$.
Let $B^*$ be some set of backward edges incident to $K$ not containing any blue edges chosen such that no persistent automorphism moves one element of $B^*$ to another.
Let $\mathcal B$ be the set of subsets of $B^*$ that satisfy \ref{itm:back}.

It is easy to see that any two members $(F,B) \neq (F',B')$ of $\mathcal F \times \mathcal B$ are non-similar.
Indeed, if $F \neq F'$, then $|F| \neq |F'|$ and there is no automorphism moving $F$ to $F'$.
If $B \neq B'$ then by the condition on $B^*$ there cannot be a persistent automorphism mapping $B$ to $B'$.
In particular $D_K$ will be at least the number of asymmetric members of $\mathcal F \times \mathcal B$.

It will be convenient to have a lower bound on the size of $\mathcal F \times \mathcal B$.
Clearly, $|\mathcal F| = |F^*|+1$ and $|\mathcal B| \geq 1$ because $\emptyset \in \mathcal B$.
If $B^*\neq \emptyset$, then $|\mathcal B| = 1 + b_r + \binom {b_g}2$, where $b_r$ and $b_g$ are the number of red and green edges in $B^*$ respectively.
As $\binom {b_g}2 \geq b_g - 1$ and $|B^*| = b_r + b_g$, we have $|\mathcal B| \geq \max(1,|B^*|)$, and consequently
\[
|\mathcal F \times \mathcal B| \geq (|F^*|+1) \cdot \max(1,|B^*|) \geq |F^*|+ \max(1,|B^*|).
\]

The choices of $F^*$ and $B^*$ depend on the number  of horizontal edges incident to each vertex, we distinguish cases $h=0$, $h=1$, and $h \geq 2$.

First assume that $h = 0$.
Then $K$ consists of a single vertex $x$ incident to $f$ forward edges and $b$ back edges.
Let $F^*$ consist of all forward edges incident to $x$, and let $B^*$ consist of all non-blue back edges incident to $x$.

Note that any persistent automorphism must fix all back neighbours of $x$, whence there cannot be a persistent automorphism mapping any edge in $B^*$ to a different one.
Moreover, any decoration of $K$ is asymmetric, since $K$ only consists of a single vertex. Thus $D_K\geq |\mathcal F \times \mathcal B|$.

Clearly, $|F^*| = f$ and $|B^*|\geq b-1$ because there is at most one back edge connecting $x$ to $r$.
If $b = 1$, then we get
\[D_K \geq |\mathcal F \times \mathcal B| \geq f+1 = \Delta \geq N_K.\]
If $b > 1$, then $r$ is not the only back neighbour of $x$.
Hence $i > 1$, and in particular $N_K \leq \Delta  - 1$.
This means that we get
\[D_K \geq |\mathcal F \times \mathcal B| \geq f + b - 1 = \Delta - 1 \geq N_K,\]
thus completing the proof of Claim~\ref{clm:DgeqN} for the case $h=0$.

If $h = 1$, then $K$ consists of two vertices connected by a single edge.
Let $x$ be one of the two vertices, and let $F^*$ consist of all forward edges incident to $x$, and let $B^*$ consist of all non-blue back edges incident to $x$.

Note that the only way a persistent automorphism could fix $K$ setwise, but not pointwise, is by swapping its two vertices.
As in the previous case, any persistent automorphism must fix all back neighbours of $x$ and thus there cannot be a persistent automorphism mapping any edge in $B^*$ to a different one.
Moreover, a decoration $(F,B) \in \mathcal F \times \mathcal B$ is asymmetric provided that at least one of $F$ and $B$ is non-empty, so $D_K \geq |\mathcal F \times \mathcal B| -1$.

Similarly as above we have $|F^*| = f+1$ and $|B^*| \geq b-1$, so  $|\mathcal F \times \mathcal B| \geq f+b \geq \Delta - 2$.
In particular, since $\Delta \geq 5$ we get
\[D_K \geq |\mathcal F \times \mathcal B|-1 \geq \Delta - 3 \geq \lfloor  \Delta / 2 \rfloor \geq N_K,\] which completes the case $h=1$.

Finally, assume that $h \geq 2$.
Then by \ref{itm:star} there are at least $(h+1)$ vertices in $K$ which are incident to at least one red or green horizontal edge.
Let $F^*$ be the set of forward edges incident to those vertices, and let $B^*$ be the set of non-blue back edges incident to them.

Due to the colouring of the horizontal edges, every persistent automorphism which fixes $K$ setwise must also fix it pointwise, and thus no two different elements in $B^*$ can be mapped onto each other by a persistent automorphism.
Moreover, any decoration of $K$ is asymmetric, so $D_K\geq |\mathcal F \times \mathcal B|$

As above, clearly $|F^*| \geq f\cdot (h+1)$ and $|B^*| \geq (b-1)\cdot (h+1)$.
If $b = 1$, then we get
\[|\mathcal F \times \mathcal B| \geq f(h+1)+1 \geq f+h+1 = \Delta \geq N_K.\]
If $b > 1$, then as in the case $h=0$, we have $N_K \leq \Delta  - 1$.
This means that we get
$|\mathcal F \times \mathcal B| \geq (h+1)(f + b - 1) \geq f+b+h-1 = \Delta - 1 \geq N_K.$
Hence \[D_K \geq |\mathcal F \times \mathcal B| \geq N_K,\] thus completing the proof of Claim~\ref{clm:DgeqN}.

By the above discussion we can choose non-similar asymmetric decorations for all components of the graph  induced by the horizontal edges of $S_i$.
If $h = 1$, then the decorations can be chosen such that they contain only edges incident to one of the two vertices in each component.
If $h \geq 2$ we can make sure that the decoration does not contain any edges incident to a vertex all of whose horizontal edges are coloured blue.
Furthermore, we can make sure that one component in each orbit is decorated by $(\emptyset, \emptyset)$, unless $h=1$ and there is a persistent automorphism swapping the two endpoints of any component in the orbit.

Let $c_i$ be the colouring obtained from $c_{i-1}$ by colouring the horizontal edges of $S_i$ according to \ref{itm:cross0}--\ref{itm:cross2}, then choosing non-similar asymmetric decorations for each component of $H$ as described above, and decorating the components accordingly.
It remains to show that $c_i$  satisfies properties \ref{itm:rootcolour}--\ref{itm:blueedges}.

For property \ref{itm:rootcolour}, first observe that we did not change the colour of any edge incident to $r$.
In particular, all edges incident to $r$ are blue with respect to $c_i$. Further note that the only edges that are blue with respect to $c_i$, but not with respect to $c_{i-1}$ are horizontal and back edges incident to $S_i$.
Property \ref{itm:rootcolour} holds for $c_{i-1}$, hence the only way that $c_i$ could violate \ref{itm:rootcolour} is, if all edges incident to a vertex in $S_i$ or a back neighbour of $S_i$ are blue in $c_i$.
We will show that all such vertices have at least one incident edge coloured red or green.

First, let $x \in S_i$.
Recall that decorating assigns colour red or green to all forward edges, so in case $f > 0$ there is a non-blue edge incident to $x$.
Since \ref{itm:blueedges} holds for $c_{i-1}$, there is at most one back edge incident to $x$ coloured blue with respect to $c_{i-1}$.
Decorating changes the colours of at most two more back edges incident to $x$.
Thus, if $b \geq 4$, then $x$ is incident to at least one red or green back edge.
So we may assume that $f=0$ and $b \leq 3$, and thus $h = \Delta - f - b \geq 2$ because $\Delta \geq 5$.
If $x$ is incident to a non-blue horizontal edge, then there is nothing to show.
If all horizontal edges incident to $x$ are coloured blue, then our choice of decorations makes sure that no edges incident to $x$ are used in the decorations.
In particular, if all edges incident to $x$ are blue with respect to $c_i$, then all back edges incident to $x$ must be blue with respect to $c_{i-1}$.
Consequently, the only back neighbour of $x$ is $r$. Since $f = 0$, we conclude that $h = \Delta - 1$ which implies that $G$ is complete, contradicting one of our initial assumptions.

Next, let $x \neq r$ be a back neighbour of $S_i$.
By \ref{itm:blueedges}, all edges between $x$ and $S_i$ are coloured red or green with respect to $c_{i-1}$.
Note that if $x$ is incident to a vertex $y \in S_i$, then it must be incident to each vertex in the orbit of $y$ under persistent automorphisms.
If $h=1$ and there is a persistent automorphism swapping the two vertices of some component $K$ incident to $x$, then $x$ is incident to both vertices of $K$.
The decoration of $K$ only used edges incident to one of the two vertices, so the edge from $x$ to the other vertex has the same colour in $c_i$ as in $c_{i-1}$ whence $x$ is incident to a non-blue edge.
If $h \neq 1$, or $h=1$ and no persistent automorphism swaps the two vertices of a component incident to $x$, then $x$ is incident to a component $K$ with decoration $(\emptyset,\emptyset)$.
All edges connecting $x$ to $K$ have the same colour in $c_i$ as in $c_{i-1}$, again showing that $x$ must be incident to a non-blue edge.

To see that property \ref{itm:changecolour} holds for $c_i$, note that in \ref{itm:cross0}--\ref{itm:cross2} only horizontal edges incident to $S_i$ were recoloured, and that the decorating step only affects forward and back edges incident to $S_i$.
All other edges have the same colour with respect to $c_i$ and $c_{i-1}$.

The proof of \ref{itm:preservecolour} rests on the following claim.

\begin{clm}
\label{clm:preservecolour}
Let $j < i$, let $\gamma$ be an element of $\Aut(G,r)$ that preserves the restriction of $c_i$ to $\hat E_j$, and let $e \in \hat E_j$.
Then $e$ and $\gamma(e)$ have the same colour with respect to $c_{i-1}$.
\end{clm}

Using this claim, it is easy to complete the proof of \ref{itm:preservecolour}.
Indeed, if $j < i$, then any automorphism which preserves the restriction of $c_i$ to $\hat E_j$ must also preserve the restriction of $c_{i-1}$ to $\hat E_j$.
By \ref{itm:preservecolour} for $c_{i-1}$, this implies that any such automorphism has to fix $S_j$ pointwise.
For the case $j = i$, note that any automorphism preserving the restriction of $c_i$ to $\hat E_i$ must be persistent.
Indeed, any such automorphism must pointwise fix $S_j$ for $j < i$ because $\hat E_j \subseteq \hat E_i$, and it must preserve the colouring on the horizontal edges because $m(i) \geq i$.
The decorations on the components of $H$ were asymmetric and non-similar, so any persistent automorphism which preserves them must pointwise fix $S_i$.

It remains to prove Claim \ref{clm:preservecolour}.
Recall that the $S_i$ were defined as the orbits with respect to $\Aut(G,r)$.
In particular, an automorphism $\gamma$ as in Claim \ref{clm:preservecolour} must setwise preserve every $S_i$, and thus also every $E_i$.
In particular, if $e \in E_1$, then Claim \ref{clm:preservecolour} is true by \ref{itm:rootcolour}.
If $e \notin E_i$, then it is true by \ref{itm:changecolour}.
If $e$ is a forward or horizontal edge of $E_i$, then so is $\gamma(e)$, and by \ref{itm:allgreen} both of them are green with respect to $c_{i-1}$, so the claim also holds in this case.

If $e$ is a red or green back edge, then $e$ was not contained in any of the decorations used in the recolouring procedure, and thus $c_i(e) = c_{i-1}(e)$.
The edge $\gamma(e)$ in this case is also a red or green back edge, hence we get $c_i(\gamma(e)) = c_{i-1}(\gamma(e))$, and consequently $c_{i-1}(e) = c_{i-1}(\gamma(e))$, unless $e$ is a blue back edge.

Finally, consider the case that $e$ is a blue back edge.
In this case, $e$ must have been contained in one of the decorations.
By property \ref{itm:back} of decorations, if $e$ is red in $c_{i-1}$, then there are no other blue back edges connected to the same component of $H$ but not to $r$.
On the other hand, if $e$ is green in $c_{i-1}$, then there is exactly one more such blue back edge.
Note that if one back edge is contained in $\hat E_j$, then $m(j) \geq i$ and consequently all back edges are contained in $\hat E_j$.
As $\gamma$ preserves the restriction of $c_i$ to $\hat E_j$, it cannot map a component of $H$ with two blue (with respect to $c_i$) back edges not incident to $r$ to another component with only one such back edge.
In particular, it cannot map a back edge which is blue in $c_i$ and red in $c_{i-1}$ to a back edge which is blue in $c_i$ and green in $c_{i-1}$.
This completes the proof of Claim \ref{clm:preservecolour} and thus also of \ref{itm:preservecolour}.

Property \ref{itm:allgreen} for $c_i$ follows from \ref{itm:changecolour} combined with \ref{itm:allgreen} for $c_{i-1}$. Finally, property \ref{itm:blueedges} follows from \ref{itm:changecolour} together with the fact that decorating a component does not assign colour blue to any of the forward edges.
\end{proof}

\section{Graphs with infinite degrees}\label{infinite}

In this section we will consider the distinguishing index of $\kappa$-regular graphs in the case of infinite $\kappa$.
We prove that there are arbitrary large cardinals $\kappa$ with the property that every connected $\kappa$-regular graph has distinguishing index at most two.
In other words, for every cardinal $\gamma$, there exists a cardinal $\kappa>\gamma$ with said property.
For self-sufficiency of this paper, we will provide some basic definitions and well known facts about these cardinals.
For a detailed treatment of ordinal and cardinal numbers see for example~\cite{Jech}.
As usual in ZFC, we identify cardinal numbers with initial ordinals.

A \textit{cardinal number} is an ordinal that is not equinumerous with any smaller ordinal.
The least cardinal number that is greater than given cardinal $j$ is called the \textit{successor} of $j$ and is denoted by $j^+$.

The \textit{aleph hierarchy} assigns a cardinal number to every ordinal number.
It can be defined by transfinite induction as follows.
\begin{align*}
&\aleph_0 = |\mathbb{N}|,\text {the least infinite cardinal,}& \\
&\aleph_{\alpha+1} = \aleph_\alpha^+ \text{, for any ordinal }\alpha,& \\
&\aleph_\alpha = \sup \{ \aleph_\beta : \beta < \alpha \} \text{, for any limit ordinal }\alpha.&
\end{align*}

Every infinite cardinal number lies in the aleph hierarchy.
A cardinal number $\kappa$ is a fixed point of the aleph hierarchy if $\kappa=\aleph_\kappa$.

For every cardinal number $\lambda$ there is a greater cardinal with this property. Indeed, consider the sequence given by $\lambda_0=\lambda$, and $\lambda_{n+1}=\aleph_{\lambda_n}$, for $n \geq 0$.
Then $\kappa = \sup_{n \in \mathbb N} \lambda_n$ is a limit ordinal and $\aleph_\kappa = \sup_{n \in \mathbb N} \aleph_{\lambda_n} = \sup_{n \in \mathbb N} \lambda_{n+1} = \kappa$.
This fact was first noticed by Veblen in \cite{veb}.

Recall that every ordinal number $\alpha$ is the set of all ordinals smaller than $\alpha$.
Thus fixed points of the aleph hierarchy may be characterised as exactly these uncountable cardinals $\kappa$, such that the set of cardinals smaller than $\kappa$ is of cardinality $\kappa$.

\begin{thm}
\label{thm:aleph}
Let $G$ be connected  $\kappa-$regular graph and let $\kappa$ be a fixed point of aleph hierarchy i.e. $\kappa=\aleph_\kappa$. Then $D'(G)\leq 2$.
\end{thm}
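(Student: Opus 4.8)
The plan is to construct a distinguishing $2$-colouring by adapting the rooted, orbit-by-orbit strategy used in the finite-degree case, while exploiting the defining combinatorial property of a fixed point of the aleph hierarchy: the number of cardinals below $\kappa$ is itself $\kappa$. Concretely, I would again root the graph at some vertex $r$, let $\Aut(G,r)$ act, and take $\mathcal S$ to be the set of orbits, enumerated by distance from $r$ with ties broken arbitrarily. Each vertex $x\in S_i$ has the same number $b>0$ of back edges (for $i>0$), the same number $f$ of forward edges and the same number $h$ of horizontal edges, but now these cardinalities may be infinite, and crucially $b+f+h=\kappa$ means at least one of them equals $\kappa$.

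**The key combinatorial device** is that with only two colours, red and green, one can attach to each vertex or small structure a "signature" drawn from a set of size $\kappa$, and the fixed-point hypothesis guarantees there are enough distinct signatures to break the relevant orbits. The natural analogue of the decoration argument from the finite case is this: for a component $K$ of the horizontal subgraph, I would choose a large set $F^\ast$ of incident forward edges and consider, for each cardinal $\mu<\kappa$, the family of subsets of $F^\ast$ of cardinality $\mu$. Two such subsets of different cardinality cannot be mapped to one another, so the number of non-similar decorations available is at least the number of cardinals below $\kappa$, namely $\kappa=\aleph_\kappa$. Since the number $N_K$ of components an orbit can reach is at most $\kappa$ (bounded by the number of edges, hence by $\kappa$ in the locally-$\kappa$ setting), the counting inequality $D_K\geq N_K$ that drove the finite proof goes through, now with two colours rather than three because we no longer need blue to mark back edges---distinct forward-edge cardinalities already separate all decorations.

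**The main obstacle** I expect is handling the horizontal edges and the limit/union step rigorously. When $h$ is infinite, the induced horizontal subgraph $H$ need not be regular of smaller degree, so the clean induction of the finite proof is unavailable; instead one must argue that a signature based on forward-edge cardinalities alone suffices to fix each $S_i$ pointwise, or treat the horizontal edges by the same cardinality-of-incident-red-edges trick. Equally delicate is the transfinite assembly: with $\kappa$ orbits one defines the colourings $c_i$ by transfinite recursion, takes the pointwise stable value at limit stages, and must verify that the final colouring $c$ still forces any colour-preserving automorphism to fix $r$ and then, via the orbit-fixing property, every vertex. I would phrase this so that once $r$ is fixed---ensured by a local asymmetry at the root that survives the limit---the inductive invariant "preserving $c$ on the edges up to level $j$ fixes $S_j$ pointwise" propagates exactly as before.

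**In summary,** the proof reuses the architecture of Theorem in Section~\ref{main} but replaces the finite degree-counting with a cardinality count that succeeds precisely because $\kappa=\aleph_\kappa$ makes the supply of non-similar two-coloured decorations as large as $\kappa$ itself; the substantive work lies in verifying the base colouring at $r$, the treatment of infinite horizontal components, and the transfinite limit.
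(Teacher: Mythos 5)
Your guiding intuition---use distinct cardinals below $\kappa$ as signatures, which is available in sufficient supply precisely because $\kappa=\aleph_\kappa$---is indeed the heart of the matter, but grafting it onto the orbit-and-decoration machinery of Section~\ref{main} leaves genuine gaps that your sketch does not close. Concretely: take $G=K_\kappa$, which is connected and $\kappa$-regular since $\kappa$ is infinite. For any root $r$ the only nontrivial orbit is $S_1=V\setminus\{r\}$, with $f=0$, $b=1$, $h=\kappa$; there are no forward edges at all, so your device ``distinct forward-edge cardinalities separate all decorations'' yields nothing, and the horizontal subgraph $H$ is again a copy of $K_\kappa$, so the degree induction that made components of $H$ asymmetric in the finite proof is now circular---exactly the obstacle you flag but do not resolve. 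The same happens in non-complete graphs, e.g.\ the graph obtained by joining $r$ to one vertex of each of $\kappa$ disjoint copies of $K_\kappa$: the second orbit again has $f=0$ and its horizontal components are copies of $K_\kappa$. Nor can you repair this by recolouring back edges: in the finite proof the back-edge part of a decoration was coloured \emph{blue}, and Claim~\ref{clm:preservecolour} relied on blue being a reserved colour to show that this recolouring does not destroy what was already achieved on earlier orbits; with only two colours, any recolouring of back edges perturbs the red/green signatures assigned when the previous orbit was processed, and nothing in your argument controls that interference. Finally, even when $f>0$, your count $D_K\geq N_K$ needs the set of incident forward edges to have size $\kappa$ (the number of cardinals $\leq\lambda$ is $<\kappa$ whenever $\lambda<\kappa$), while $N_K$ can be as large as $\kappa$.

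The paper's actual proof discards the rooted, orbit-by-orbit architecture entirely, and this is the idea your proposal is missing. A connected $\kappa$-regular graph has exactly $\kappa$ vertices; enumerate them as $x_\alpha$, $\alpha<\kappa$, and build the colouring by transfinite recursion so that \emph{every} vertex $x_\alpha$ ends up with exactly $\aleph_\alpha$ incident blue edges and all its other incident edges green. At step $\alpha$, the edges at $x_\alpha$ that are already coloured all lead to earlier vertices, so there are at most $|\alpha|\leq\aleph_\alpha<\aleph_\kappa=\kappa$ of them; hence $\kappa$ uncoloured edges remain, of which one colours $\aleph_\alpha$ blue (towards later vertices) and the rest green, and infinite cardinal arithmetic absorbs the at most $\aleph_\alpha$ blue edges inherited from earlier steps, giving blue degree exactly $\aleph_\alpha$. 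Since $\alpha\mapsto\aleph_\alpha$ is injective, all vertices have pairwise distinct blue degrees, so every colour-preserving automorphism fixes every vertex---no root, no orbits, no decorations, no limit-stage analysis, and $K_\kappa$ and $K_{\kappa,\kappa}$ are handled uniformly rather than as problem cases. The fixed-point hypothesis is used exactly once: to guarantee that $\aleph_\alpha<\kappa$ for every index $\alpha<\kappa$, i.e.\ that there are $\kappa$ distinct \emph{feasible} signatures, one per vertex.
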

\begin{proof}

Let $x_0, x_1, x_2 \dots, x_\alpha \dots$ for $\alpha<\kappa$ be a $\kappa$-enumeration of the vertices of $G$. We will colour the graph by transfinite induction on $i$.

Assume that for every ordinal $j<i$ vertex $x_j$ has exactly $\aleph_j$ incident blue edges, the rest of its incident edges are green, and the remaining edges are uncoloured. Notice that the only edges incident to $x_i$ that are already coloured are edges incident also to $\{x_j: j < i \}$. As $|\{x_j: j < i \}|\leq \aleph_i$, vertex $x_i$ is incident to at most $\aleph_i$ blue edges, and $\kappa$  edges incident to $x_i$ are still uncoloured. As $\aleph_i < \aleph_\kappa = \kappa$, we can choose $\aleph_i$ edges between $x_i$ and the vertices among $\{x_j: j>i \}$ and colour them blue, and we colour the remaining edges incident with $x_i$ with green.

After the induction, every vertex $x_i$ has exactly $\aleph_i$ incident blue edges because $x_i$ has $\aleph_i$ incident blue edges after the step $i$, the remaining incident edges are green and we do not recolour any edge. As every vertex is incident to a different number of blue edges it is fixed by every colour preserving automorphism and therefore we obtained a distinguishing edge-colouring of $G$.
\end{proof}

A simple corollary of this theorem is that the class of cardinals $\kappa$ such that any connected $\kappa$-regular graph admits a distinguishing edge-colouring with $2$ colours is unbounded (equivalently it is a proper class).
We conjecture that every infinite $\kappa$ has this property.

\begin{conj}
Let $G$ be a connected $\kappa$-regular graph for some infinite cardinal $\kappa$. Then $G$ admits a distinguishing edge-colouring with two colours.
\end{conj}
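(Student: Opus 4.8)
The plan is to keep the distinguishing strategy of Theorem~\ref{thm:aleph} — separate all vertices by an $\Aut(G)$-invariant assigned through a transfinite $2$-colouring — but to replace the invariant by something richer than blue-degree. First I would record the basic arithmetic: a connected $\kappa$-regular graph has exactly $\kappa$ vertices and $\kappa$ edges, so we must separate $\kappa$ many vertices. The obstruction for a non-fixed point $\kappa=\aleph_\alpha$ is that there are only $|\alpha|<\kappa$ infinite cardinals below $\kappa$, so blue-degree alone partitions $V(G)$ into fewer than $\kappa$ classes and cannot distinguish all vertices; this is exactly the feature the fixed-point hypothesis removes. I would therefore pass to the \emph{iterated refinement} of blue-degree: the stage-$0$ type of a vertex is its blue-degree, and its stage-$(\lambda+1)$ type records, for every type $t$ already present, the (capped at $\kappa$) number of blue-neighbours of type $t$, with the evident limit stages. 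This process stabilises at some ordinal stage to a \emph{stable type} preserved by every automorphism, and any $2$-colouring whose stable type is \emph{discrete} (pairwise distinct on vertices) is automatically distinguishing.

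The colouring I would build by transfinite recursion along an enumeration $(x_i)_{i<\kappa}$ of $V(G)$, exactly as in the proof of Theorem~\ref{thm:aleph}: when $x_i$ is processed only its edges to earlier vertices are coloured, and since $|\{x_j : j<i\}|<\kappa$ there remain $\kappa$ uncoloured edges at $x_i$ to manoeuvre with. At each step the aim is to colour these edges so that $x_i$ acquires a stable type distinct from every vertex handled so far, and so that no later choice can be forced to repeat it.

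The step I expect to be the main obstacle is precisely this last point: the \emph{realisability and consistency} of the assignment. In the fixed-point case the target label, the cardinal $\aleph_i$, is freely realisable because blue-degree depends only on the edges at $x_i$. A refined type, by contrast, depends on the types of the blue-neighbours of $x_i$, which are themselves still being determined; one must break this circularity while simultaneously guaranteeing that $\kappa$ many distinct stable types genuinely occur, even though the first refinement layer supplies fewer than $\kappa$ of them. Controlling this cardinal bookkeeping — forcing the refinement to terminate discretely while every partial colouring still extends — is the crux, and it is exactly what the fixed-point hypothesis trivialises.

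A complementary line worth trying is a \emph{motion} argument: in a $\kappa$-regular graph every non-trivial automorphism moves $\kappa$ edges, so one might hope to adapt high-motion distinguishing results such as those of \cite{lehner-edge} to uncountable degree. The difficulty here is genuinely set-theoretic. There are $2^{\kappa}$ edge $2$-colourings, but a single automorphism that moves $\kappa$ edges can still fix $2^{\kappa}$ of them, so the naive counting that succeeds in the finite and countable settings does not beat $2^{\kappa}$; a more delicate, probably rank-based or absoluteness-based, argument would be needed, and this is where I would expect the real work to lie.
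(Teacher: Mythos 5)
The statement you are trying to prove is not a theorem of the paper at all: it is stated there as an open conjecture. The paper proves only the special case in which $\kappa$ is a fixed point of the aleph hierarchy ($\kappa=\aleph_\kappa$, Theorem~\ref{thm:aleph}), together with a consistency result (Theorem~\ref{thm:aleph2}); the general case is left open. So there is no proof in the paper to compare yours against, and the relevant question is whether your proposal actually closes the gap the authors left. It does not. What you have written is a research plan in which the two decisive steps are identified as obstacles but never overcome, and you say so yourself (``the main obstacle'', ``this is where I would expect the real work to lie''). A proof attempt that stops at the point where the known argument breaks down is not a proof.

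Concretely, the gaps are these. In the iterated-refinement route, the stable type of $x_i$ depends on the types of its blue-neighbours, which at stage $i$ of the recursion are still being determined by later colouring choices; you name this circularity but give no mechanism that breaks it, and no argument that $\kappa$ pairwise distinct stable types are simultaneously realisable. This is not a technicality: for $\kappa=\aleph_\alpha$ with $|\alpha|<\kappa$, the number of stage-$0$ types is at most $|\alpha|+\aleph_0<\kappa$, and the number of stage-$(\lambda+1)$ types is bounded by a power-set-style computation whose outcome depends on the cardinal arithmetic of the model; under GCH, for instance, the successor stages below $\kappa=\aleph_\omega$ grow only one aleph at a time, so you would need to control what happens at limit stages of the refinement, and nothing in your sketch does this. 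Whether the refinement ever becomes discrete, and whether a colouring realising a discrete refinement exists, is essentially a restatement of the conjecture, not a reduction of it. The motion route has the same status: you correctly observe that every non-trivial automorphism of a connected $\kappa$-regular graph moves $\kappa$ edges, but then note that the counting argument that works in the countable case (as in \cite{lehner-edge}) does not beat $2^\kappa$ here, and you offer no replacement. Both halves of the proposal therefore end exactly where the open problem begins.
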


Although Theorem \ref{thm:aleph} does not give us results about every cardinal it can be used to obtain consistency results about some of them. We say that a cardinal number $\kappa$ is a \textit{regular cardinal} if the sum of less than $\kappa$ sets of cardinality less than $\kappa$ has cardinality less than $\kappa$. Every cardinal $j^+$ is a regular cardinal as well as the cardinal $\aleph_0$. If a cardinal $\kappa$ is a regular infinite cardinal, then it is consistent with ZFC that the cardinal $2^\kappa$ is a fixed point of the aleph hierarchy. This was first noticed by Solovay \cite{Sol}, for proofs see \textit{Application of Forcing} Chapter in Jech \cite{Jech}. Summarising, we obtained the following consistency result, which may be interesting even in the case when $\kappa=\aleph_0$.

\begin{thm}
\label{thm:aleph2}Let $\kappa$ be a regular infinite cardinal. Then it is consistent with ZFC that every connected $2^\kappa-$regular graph has distinguishing index at most two.
\end{thm}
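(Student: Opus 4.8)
The plan is to obtain the statement as an immediate corollary of Theorem~\ref{thm:aleph} together with the forcing-theoretic consistency fact attributed to Solovay. Since the assertion is itself a consistency statement, it suffices to exhibit a model of ZFC in which every connected $2^\kappa$-regular graph has distinguishing index at most two; I would make this reduction explicit at the outset.

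Given a regular infinite cardinal $\kappa$, the key input is the result (see \cite{Sol} and the \emph{Application of Forcing} chapter of \cite{Jech}) that it is consistent with ZFC that $2^\kappa$ is a fixed point of the aleph hierarchy, i.e.\ that $2^\kappa = \aleph_{2^\kappa}$. I would pass to such a model of ZFC and work inside it. There the cardinal $\lambda := 2^\kappa$ satisfies $\lambda = \aleph_\lambda$, so $\lambda$ is precisely of the form required by the hypothesis of Theorem~\ref{thm:aleph}.

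The final step is then a direct substitution. Applying Theorem~\ref{thm:aleph} with the fixed point $\lambda$, we conclude that in this model every connected $\lambda$-regular graph $G$, that is, every connected $2^\kappa$-regular graph, satisfies $D'(G) \leq 2$. Since this holds in a model of ZFC, the statement ``every connected $2^\kappa$-regular graph has distinguishing index at most two'' is consistent with ZFC, which is exactly the desired conclusion.

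The only genuinely deep ingredient is the forcing construction yielding a model in which $2^\kappa$ is a fixed point of the aleph hierarchy, and this is precisely the cited theorem of Solovay, which we are entitled to assume. Consequently there is no real obstacle remaining: once that consistency fact is available, the argument is a one-line invocation of Theorem~\ref{thm:aleph} applied to $\lambda = 2^\kappa$. I would only take care to state clearly that the conclusion is a \emph{relative} consistency result, holding under the standing assumption that ZFC is consistent.
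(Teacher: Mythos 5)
Your proposal is correct and takes essentially the same route as the paper: the paper also obtains Theorem~\ref{thm:aleph2} by citing Solovay's result that $2^\kappa$ being a fixed point of the aleph hierarchy is consistent with ZFC, and then applying Theorem~\ref{thm:aleph} in such a model. Your write-up is in fact slightly more explicit about the relative-consistency logic (working inside a model where $2^\kappa = \aleph_{2^\kappa}$ and noting Theorem~\ref{thm:aleph} holds there), which the paper compresses into a single ``summarising'' sentence.
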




\begin{thebibliography}{10}

\bibitem{AC}
M. O. {Albertson} and K.L. {Collins}.
\newblock {Symmetry breaking in graphs.}
\newblock {\em {Electron. J. Comb.}}, 3(1), 1996.

\bibitem{BAB}
L. {Babai}.
\newblock { Asymmetric trees with two prescribed valences.}
\newblock {\em {Acta Math. Acad. Sci. Hung.}},{29}, {1977}.

\bibitem{BP}
I.~{Broere} and M.~{Pil\'sniak}.
\newblock {The~Distinguishing Index of~the Infinite Graphs.}
\newblock {\em {Electron. J. Comb.}}, 23(1), 2015.

\bibitem{BP2}
I.~{Broere} and M.~{Pil\'sniak}.
\newblock {The distinguishing index of the Cartesian product of countable
  graphs.}
\newblock {\em {Ars Math. Contemp.}}, 13:15--21, 2017.

\bibitem{FI}
M.~J.~Fisher and G.~Isaak.
\newblock {Distinguishing colorings of Cartesian products of complete graphs.}
\newblock {\em {Discrete Math.}}, 308:2240--2246, 2008.


\bibitem{GKP}
A.~{Gorzkowska}, R.~{Kalinowski}, and M.~{Pil\'sniak}.
\newblock {The Distinguishing Index of the Cartesian Product of Finite Graphs.}
\newblock {\em {Ars Math. Contemp.}}, 12:77--87, 2017.

\bibitem{IJK}
W.~Imrich, J.~Jerebic and S.~Klav\v zar.
\newblock {The distinguishing number of {C}artesian products of complete graphs.}
\newblock {\em {European J. Combin.}}, 29:922--929, 2008.

\bibitem{IKPW}
W.~{Imrich}, R.~{Kalinowski}, M.~{Pil\'sniak}, and M.~{Wo{\'z}niak}.
\newblock {The distinguishing index of connected graphs without pendant edges.}
\newblock {\em {Ars Math. Contemp. }}, accepted 2019.

\bibitem{Jech}
T.~{Jech}.
\newblock {\em {Set Theory. Third Millenium Edition, revised and expanded}}.
\newblock {Springer-Verlag Berlin Heidelberg New York}, 2003.

\bibitem{KP}
R.~{Kalinowski} and M.~{Pil\'sniak}.
\newblock {Distinguishing graphs by edge-colourings.}
\newblock {\em {European J. Combin.}}, 45:124--131, 2015.

\bibitem{lehner-edge}
F.~{Lehner}.
\newblock {Breaking graph symmetries by edge colourings.}
\newblock {\em {J. Comb. Theory, Ser. B}}, 127:205--214, 2017.

\bibitem{LS}
F. Lehner and S.M. Smith.
\newblock {On symmetries of edge and vertex colourings of graphs.}
\newblock {preprint {2018}.}

\bibitem{LL}
L. {Lov\'asz}.
\newblock {Problem 11, in: Combinatorial structures and their
applications; proceedings Calgary 1969.}
\newblock {\em { Gordon and Breach}}, {New York}, {243--246}, {1970}.

\bibitem{P}
M.~{Pil\'sniak}.
\newblock {Improving Upper Bounds for the Distinguishing Index.}
\newblock {\em {Ars Math. Contemp.}}, 13:259--274, 2017.

\bibitem{PS}
M.~{Pil\'sniak} and M.~{Stawiski}.
\newblock {The Optimal General Upper Bound for the Distinguishing Index of
  Infinite Graphs.}
\newblock {\em J. Graph Theory}, 93:463--469, 2020.

\bibitem{PT}
M.~{Pil\'sniak} and T.~{Tucker}.
\newblock {Distinguishing Index of Maps.}
\newblock {\em {Europ. J. Comb.}}, 84:103034, 2020.

\bibitem{Sol}
R.~M. {Solovay}.
\newblock {Independence results in the theory of cardinals.}
\newblock {\em {Notices Amer. Math. Soc.}}, 10(6):595, 1963.

\bibitem{veb}
O.~{Veblen}.
\newblock {Continuous increasing functions of finite and transfinite ordinals}.
\newblock {\em {Trans. Amer. Math. Soc.}}, 9(3), 1908.

\end{thebibliography}

\end{document}